\def\imod#1{\allowbreak\mkern10mu({\operator@font mod}\,\,#1)}
\newtheorem{lemma}{Lemma}[section]
\theoremstyle{definition}
\begin{document}

\title{Revisiting The Riemann Zeta Function at Positive Even Integers}
\author{Krishnaswami Alladi}
\author{Colin Defant}

\begin{abstract}
Using Parseval's identity for the Fourier coefficients of $x^k$, we provide a new proof that $\zeta(2k)=\dfrac{(-1)^{k+1}B_{2k}(2\pi)^{2k}}{2(2k)!}$.  
\end{abstract}

\maketitle 

\section{Introduction} 
One of the most famous mathematical problems was the evaluation of
\[
\sum^{\infty}_{n=1}\frac{1}{n^2}.
\]
The Italian mathematician Pietro Mengoli originally posed this problem in 1644. The problem was later popularized by the Bernoullis who lived in Basel, Switzerland, so this
became known as the \emph{Basel Problem} \cite{Havil}. Leonhard Euler solved this brilliantly
when he was just twenty-eight years old by showing that the sum in question was equal to
$\pi^2/6$. Indeed, this was Euler's first mathematical work, and it brought
him world fame. In solving the Basel Problem, Euler found closed-form
evaluations more generally of
\[
\zeta(2k)=\sum^{\infty}_{n=1}\frac{1}{n^{2k}}
\]
for all even integers $2k\ge 2$. The value of $\zeta(2k)$ is given as a
rational multiple of $\pi^{2k}$. More precisely, Euler showed that \cite[Page 16]{Rademacher}
\begin{equation}\label{Eq2}
\zeta(2k)=\frac{(-1)^{k+1}B_{2k}(2\pi)^{2k}}{2(2k)!},
\end{equation}
where the Bernoulli numbers $B_m$ are defined by the exponential generating function \[\frac{x}{e^x-1}=\sum_{m=0}^\infty B_m\frac{x^m}{m!}.\] 

Since Euler's time, numerous proofs of his formula for $\zeta(2k)$ have been
given. Our goal here is to give yet another proof of \eqref{Eq2} that involves a
new identity for Bernoulli numbers and a different induction process than
used previously. We consider the Fourier coefficients $a_n(k)$ and $b_n(k)$
of the function that is periodic of period $2\pi$ and is given by $f(x)=x^k$ on the interval $(-\pi,\pi]$, and we obtain a 
pair of intertwining recurrences for these coefficients
(see \eqref{Eq7} and \eqref{Eq8} below). We then apply Parseval's theorem to connect these Fourier
coefficients with $\zeta (2k)$ and then establish (1) by solving this
recurrence. The novelty in our approach is a new identity for Bernoulli
numbers that we obtain and use.

A very recent paper by Navas, Ruiz, and Varona \cite{Navas} establishes new
connections between the Fourier coefficients of many fundamental
sequences of polynomials such as the Legendre polynomials and the Gegenbauer
polynomials by making them periodic of period $1$. In doing so, these
authors consider the Fourier coefficients of $x^k$, but they do not use
their ideas to establish Euler's formula (1). Another paper by Kuo \cite{Kuo}
provides a recurrence for the values of $\zeta(2k)$ involving only the
values $\zeta(2j)$ for $j\le k/2$ instead of requiring $j\le k-1$ as we do.
Our method is very different from Kuo's. 

\section{Warm Up}
For each positive integer $k$, let $a_n(k)$ and $b_n(k)$ be the $n^\text{th}$ Fourier coefficients of the function $x\mapsto x^k$. That is, $a_0(k)=\displaystyle{\frac{1}{2\pi}\int_{-\pi}^\pi x^k\,dx}$, $a_n(k)=\displaystyle{\frac{1}{\pi}\int_{-\pi}^\pi x^k\cos(nx)\,dx}$, and $b_n(k)=\displaystyle{\frac{1}{\pi}\int_{-\pi}^\pi x^k\sin(nx)\,dx}$ for $n,k\geq 1$. Parseval's identity \cite[page 191]{Rudin} informs us that 
\begin{equation}\label{Eq1}
\frac{1}{\pi}\int_{-\pi}^\pi x^{2k}\,dx=2a_0(k)^2+\sum_{n=1}^\infty(a_n(k)^2+b_n(k)^2).
\end{equation} 

Let us apply \eqref{Eq1} in the case $k=1$. We easily calculate that $a_0(1)=a_n(1)=0$ and $b_n(1)=2\dfrac{(-1)^{n+1}}{n}$ for all $n\geq 1$. This implies that \[\frac{2\pi^2}{3}=\frac{1}{\pi}\int_{-\pi}^\pi x^2\,dx=\sum_{n=1}^\infty b_n(1)^2=\sum_{n=1}^\infty\frac{4}{n^2},\] so we obtain Euler's classic result $\zeta(2)=\dfrac{\pi^2}{6}$. In Section 4, we use Parseval's identity to obtain a new inductive proof of Euler's famous formula \eqref{Eq2}.

\section{An Identity for Bernoulli Numbers}Before we proceed, let us recall some well-known properties of Bernoulli numbers (see \cite[Chapter 1]{Rademacher}). In Lemma \ref{Lem1}, we also establish one new identity involving these numbers. 

The first few Bernoulli numbers are $B_0=1$, $B_1=-\frac 12$, and $B_2=\frac 16$. If $m\geq 3$ is odd, then $B_m=0$. The equation \begin{equation}\label{Eq3}
\sum_{m=0}^{n-1}B_m{n\choose m}=0
\end{equation} holds for any integer $n\geq 2$. If $n$ is odd, then \begin{equation}\label{Eq4}
\sum_{m=0}^{n}B_m2^m{n\choose m}=0.
\end{equation}
One can prove \eqref{Eq4} by evaluating the Bernoulli polynomial $B_n(x)=\sum_{\ell=0}^n B_\ell{n\choose\ell}x^{n-\ell}$ at $x=1/2$ and using the known identity $B_n(x)=(-1)^nB_n(1-x)$. 

In what follows, we write ${n\choose r_1,r_2,r_3}$ to denote the trinomial coefficient given by $\dfrac{n!}{r_1!r_2!r_3!}$. 
\begin{lemma}\label{Lem1}
For any positive integer $k$, \[\sum_{i+t\leq\left\lfloor k/2\right\rfloor}B_{2t}2^{2t}\textstyle{2k+2\choose 2t,2i+1,2k-2t-2i+1}=(k+1)\left(2^{2k}+(-1)^k{2k\choose k}\right),\] where the sum ranges over all nonnegative integers $i$ and $t$ satisfying $i+t\leq \left\lfloor k/2\right\rfloor$.
\end{lemma}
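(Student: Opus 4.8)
The plan is to recognize the left-hand side as a single coefficient in a product of three power series, collapse two of those factors using a hyperbolic identity, and then evaluate a classical binomial sum.

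First I would set $j = k - t - i$, so that the constraint $i + t \le \lfloor k/2\rfloor$ becomes $j \ge \lceil k/2\rceil$ while the trinomial coefficient is $\binom{2k+2}{2t,\,2i+1,\,2j+1}$ with $t + i + j = k$. The key input is the identity $\sum_{t\ge0}B_{2t}2^{2t}\frac{x^{2t}}{(2t)!} = x\coth x$, which follows because $\sum_{t\ge0}B_{2t}\frac{x^{2t}}{(2t)!}$ is the even part $\frac{x}{e^x-1}+\frac x2 = \frac x2\coth\frac x2$ of the Bernoulli generating function (recall $B_m = 0$ for odd $m \ge 3$), and substituting $x\mapsto 2x$ gives the stated form. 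Since also $\sum_{i\ge0}\frac{x^{2i+1}}{(2i+1)!} = \sinh x$, the left-hand side of the Lemma equals $(2k+2)!$ times the coefficient of $x^{2k+2}$ in $(x\coth x)(\sinh x)R(x)$, where $R(x) = \sum_{j\ge\lceil k/2\rceil}\frac{x^{2j+1}}{(2j+1)!}$ is the tail of $\sinh x$; we keep the full $i$-series and truncate only the $j$-series, which is legitimate because the summand is symmetric in $i$ and $j$ and extracting $[x^{2k+2}]$ forces $t + i + j = k$. All of this can be read as an identity of functions analytic near $x = 0$.

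Next I would use $(\coth x)(\sinh x) = \cosh x$, so the generating function becomes $x\cosh x\cdot R(x)$ with $x\cosh x = \sum_{\ell\ge0}\frac{x^{2\ell+1}}{(2\ell)!}$. The coefficient of $x^{2k+2}$ forces $\ell + j = k$, and because $2\ell + (2j+1) = 2k+1$ each term $\frac{(2k+2)!}{(2\ell)!(2j+1)!}$ equals $(2k+2)\binom{2k+1}{2j+1}$. Reindexing by $r = k - j$ and using $\binom{2k+1}{2j+1} = \binom{2k+1}{2r}$, the left-hand side becomes $(2k+2)\sum_{r=0}^{\lfloor k/2\rfloor}\binom{2k+1}{2r}$, i.e.\ $(2k+2)$ times the sum of $\binom{2k+1}{s}$ over even $s$ with $0 \le s \le k$.

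Finally I would evaluate this binomial sum. Writing $E$ and $O$ for the sums of $\binom{2k+1}{s}$ over even, respectively odd, $s \in \{0,1,\dots,k\}$, the symmetry $\binom{2k+1}{s} = \binom{2k+1}{2k+1-s}$ gives $E + O = \sum_{s=0}^{k}\binom{2k+1}{s} = \tfrac12\cdot 2^{2k+1} = 2^{2k}$, while the classical partial-sum identity $\sum_{s=0}^{k}(-1)^s\binom{2k+1}{s} = (-1)^k\binom{2k}{k}$ gives $E - O = (-1)^k\binom{2k}{k}$. Adding, $2E = 2^{2k} + (-1)^k\binom{2k}{k}$, so the left-hand side equals $(2k+2)\cdot\tfrac12\bigl(2^{2k} + (-1)^k\binom{2k}{k}\bigr) = (k+1)\bigl(2^{2k} + (-1)^k\binom{2k}{k}\bigr)$, as claimed. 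The step I expect to be the real obstacle is the first one — recognizing the weighted trinomial sum as a coefficient in a product of generating functions and noticing that $(\coth x)(\sinh x)$ collapses to $\cosh x$; once that is in place, the truncation bookkeeping and the two binomial identities are routine.
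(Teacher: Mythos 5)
Your argument is correct, and it takes a genuinely different route from the paper's. The paper proceeds by complementation: it first evaluates the unrestricted sum over $i+t\le k$ (collapsing the $i$-sum via $\sum_i{2(k-t)+2\choose 2i+1}=2^{2k-2t+1}$ and then applying \eqref{Eq3} to get $2^{2k+1}(k+1)$), and then subtracts the tail over $\left\lfloor k/2\right\rfloor<i+t\le k$, which it handles by grouping on $m=i+t$, invoking \eqref{Eq4} to make the Bernoulli numbers disappear, and finishing with a binomial symmetrization that produces $(k+1)\left(2^{2k}-(-1)^k{2k\choose k}\right)$. You instead reindex by $j=k-t-i$ so that the awkward truncation becomes a tail condition $j\ge\lceil k/2\rceil$ on a single index, realize the whole sum as $(2k+2)!\,[x^{2k+2}]$ of a product of three series, and let the collapse $(x\coth x)(\sinh x)=x\cosh x$ eliminate the Bernoulli numbers in one stroke; what survives is the half-range sum $(2k+2)\sum_{r=0}^{\left\lfloor k/2\right\rfloor}{2k+1\choose 2r}$, which you evaluate with the standard partial alternating sum $\sum_{s=0}^{k}(-1)^s{2k+1\choose s}=(-1)^k{2k\choose k}$. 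The two key inputs are close in content --- \eqref{Eq4} is exactly the statement that the odd coefficients of $x/\sinh x$ vanish, which is the same parity information your hyperbolic identity exploits --- but your packaging avoids the complementary-sum decomposition entirely and makes the shape of the answer transparent: $2^{2k}+(-1)^k{2k\choose k}$ is simply twice the even-index half-row sum of ${2k+1\choose \cdot}$. Two small remarks: your appeal to ``symmetry in $i$ and $j$'' to justify truncating only the $j$-series is unnecessary (the constraint $i+t\le\left\lfloor k/2\right\rfloor$ translates exactly to $j\ge\lceil k/2\rceil$ with $i,t\ge0$ free, which is all you need), and you should note explicitly that $x\coth x$, not $\coth x$, is the object with a power series at $0$, so the identity is read for the analytic functions $x\coth x$ and $\sinh x$ near the origin --- which you do.
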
 
\begin{proof}
The difficulty in evaluating the given sum spawns from its unusual limits of summation. Therefore, we will first evaluate the sum obtained by allowing $i$ and $t$ to range over all nonnegative integers satisfying $i+t\leq k$. Because $\sum_{i=0}^{k-t}{2(k-t)+2\choose 2i+1}=2^{2k-2t+1}$, we have \[\sum_{i+t\leq k}B_{2t}2^{2t}{\textstyle{2k+2\choose 2t,2i+1,2k-2t-2i+1}}=\sum_{t=0}^k B_{2t}2^{2t}{\textstyle{2k+2\choose 2t}}\sum_{i=0}^{k-t}{\textstyle{2(k-t)+2\choose 2i+1}}=2^{2k+1}\sum_{t=0}^{k}B_{2t}{\textstyle{2k+2\choose 2t}}.\] Since $B_m=0$ for all odd $m\geq 3$, 
\begin{equation}\label{Eq5}
\sum_{i+t\leq k}B_{2t}2^{2t}{\textstyle{2k+2\choose 2t,2i+1,2k-2t-2i+1}}=2^{2k+1}\left(\sum_{\ell=0}^{2k+1}B_\ell{\textstyle{2k+2\choose \ell}}-B_1{\textstyle{2k+2\choose 1}}\right)=2^{2k+1}(k+1).
\end{equation} Note that we used \eqref{Eq3} along with the fact that $B_1=-\frac 12$ to deduce the last equality above. 

We next compute \[\sum_{\left\lfloor k/2\right\rfloor<i+t\leq k}B_{2t}2^{2t}{\textstyle{2k+2\choose 2t,2i+1,2k-2t-2i+1}}=\sum_{m=\left\lfloor k/2\right\rfloor+1}^k{\textstyle{2k+2\choose 2m+1}}\sum_{t=0}^m B_{2t}2^{2t}{\textstyle{2m+1\choose 2t}}\] \[=\sum_{m=\left\lfloor k/2\right\rfloor+1}^k{\textstyle{2k+2\choose 2m+1}}\left(\sum_{\ell=0}^{2m+1} B_\ell2^\ell{\textstyle{2m+1\choose \ell}}-2B_1{\textstyle{2m+1\choose 1}}\right)=\sum_{m=\left\lfloor k/2\right\rfloor+1}^k{\textstyle{2k+2\choose 2m+1}}(2m+1),\] where we have used \eqref{Eq4} to see that $\sum_{\ell=0}^{2m+1} B_\ell2^\ell{\textstyle{2m+1\choose \ell}}=0$. Therefore,  \[\sum_{\left\lfloor k/2\right\rfloor<i+t\leq k}B_{2t}2^{2t}{\textstyle{2k+2\choose 2t,2i+1,2k-2t-2i+1}}=\sum_{m=\left\lfloor k/2\right\rfloor+1}^k(2k+2)\left[{\textstyle{2k\choose 2m}}+{\textstyle{2k\choose 2m-1}}\right]\] 
\begin{equation}\label{Eq6}
=(k+1)\sum_{m=\left\lfloor k/2\right\rfloor+1}^k\left[{\textstyle{2k\choose 2m}}+{\textstyle{2k\choose 2k-2m}}+{\textstyle{2k\choose 2m-1}}+{\textstyle{2k\choose 2k-2m+1}}\right]=(k+1)\left(2^{2k}-(-1)^k{\textstyle{2k\choose k}}\right).
\end{equation} The proof of Lemma \ref{Lem1} now follows if we subtract \eqref{Eq6} from \eqref{Eq5}. 
\end{proof}
\section{The Formula for $\zeta(2k)$}
To begin this section, let us use integration by parts to see that for any $n\geq 1$ and $k\geq 2$, 
\begin{equation}\label{Eq7}
a_n(k)=\frac{1}{\pi}\int_{-\pi}^\pi x^k\cos(nx)\,dx=-\frac kn\frac{1}{\pi}\int_{-\pi}^\pi x^{k-1}\sin(nx)\,dx=-\frac kn\,b_n(k-1).
\end{equation}
Similarly, if $k$ is odd, then 
\begin{equation}\label{Eq8}
b_n(k)=\frac{1}{\pi}\int_{-\pi}^\pi x^k\sin(nx)\,dx=2\frac{(-1)^{n+1}\pi^{k-1}}{n}+\frac kn\,a_n(k-1).
\end{equation}
Now, it is clear from the definitions of $a_n(k)$ and $b_n(k)$ that $a_n(k)=0$ whenever $k$ is odd and $b_n(k)=0$ whenever $k$ is even. Hence, $a_n(k)^2+b_n(k)^2$ is actually equal to $(a_n(k)+b_n(k))^2$. If we appeal to \eqref{Eq7} and \eqref{Eq8} recursively and use the fact that $b_n(1)=2\dfrac{(-1)^{n+1}}{n}$, then a simple inductive argument shows that when $n,k\geq 1$, 
\begin{equation}\label{Eq9}
a_n(k)+b_n(k)=\sum_{\ell=0}^{\left\lfloor\frac{k-1}{2}\right\rfloor} c_n(k,\ell)\frac{\pi^{2\ell}}{n^{k-2\ell}},
\end{equation}
where we define
\begin{equation}\label{Eq10}
c_n(k,\ell)=\begin{cases} \frac{2k!}{(2\ell+1)!}(-1)^{\left\lfloor k/2\right\rfloor+\ell+n+1}, & \mbox{if } 0\leq\ell\leq \left\lfloor\frac{k-1}{2}\right\rfloor; \\ 0, & \mbox{otherwise. } \end{cases}
\end{equation}

Gathering \eqref{Eq1}, \eqref{Eq9}, and \eqref{Eq10} together yields \[2\frac{\pi^{2k}}{2k+1}-2a_0(k)^2=\frac{1}{\pi}\int_{-\pi}^\pi x^{2k}\,dx-2a_0(k)^2=\sum_{n=1}^\infty\left(\sum_{\ell=0}^{\left\lfloor\frac{k-1}{2}\right\rfloor}c_n(k,\ell)\frac{\pi^{2\ell}}{n^{k-2\ell}}\right)^2\] 
\begin{equation}\label{Eq12} 
=\sum_{n=1}^\infty\sum_{j=0}^{2\left\lfloor\frac{k-1}{2}\right\rfloor}r_n(k,j)\frac{\pi^{2j}}{n^{2k-2j}}=\sum_{j=0}^{2\left\lfloor\frac{k-1}{2}\right\rfloor}r_n(k,j)\pi^{2j}\zeta(2k-2j),
\end{equation} where \[r_n(k,j)=\sum_{i=0}^jc_n(k,i)c_n(k,j-i)=\sum_{i=\max\left(0,j-\left\lfloor\frac{k-1}{2}\right\rfloor\right)}^{\min\left(\left\lfloor\frac{k-1}{2}\right\rfloor,j\right)}c_n(k,i)c_n(k,j-i)\] \[=\sum_{i=\max\left(0,j-\left\lfloor\frac{k-1}{2}\right\rfloor\right)}^{\min\left(\left\lfloor\frac{k-1}{2}\right\rfloor,j\right)}\frac{2k!}{(2i+1)!}(-1)^{\left\lfloor\frac{k-1}{2}\right\rfloor+i+n+1}\frac{2k!}{(2j-2i+1)!}(-1)^{\left\lfloor\frac{k-1}{2}\right\rfloor+j-i+n+1}\] \begin{equation}\label{Eq13}
=\frac{4(-1)^jk!^2}{(2j+2)!}\sum_{i=\max\left(0,j-\left\lfloor\frac{k-1}{2}\right\rfloor\right)}^{\min\left(\left\lfloor\frac{k-1}{2}\right\rfloor,j\right)}{2j+2\choose 2i+1}.
\end{equation} 
Let $\mu_j(k)=\displaystyle{\sum_{i=\max\left(0,j-\left\lfloor\frac{k-1}{2}\right\rfloor\right)}^{\min\left(\left\lfloor\frac{k-1}{2}\right\rfloor,j\right)}{2j+2\choose 2i+1}}$. If $j\leq\left\lfloor\frac{k-1}{2}\right\rfloor$, then $\mu_j(k)=\displaystyle{\sum_{i=0}^j{2j+2\choose 2i+1}=2^{2j+1}}$. If $\left\lfloor\frac{k-1}{2}\right\rfloor<j\leq k-1$, then \[\mu_j(k)=2^{2j+1}-\sum_{i=0}^{j-\left\lfloor\frac{k-1}{2}\right\rfloor-1}{2j+2\choose 2i+1}-\sum_{i=\left\lfloor\frac{k-1}{2}\right\rfloor+1}^{j}{2j+2\choose 2i+1}=2^{2j+1}-2\sum_{i=0}^{j-\left\lfloor\frac{k-1}{2}\right\rfloor-1}{2j+2\choose 2i+1}.\] 

Assume inductively that we have proven \eqref{Eq2} when $k$ is replaced by any smaller positive integer. Using \eqref{Eq12}, \eqref{Eq13}, and this induction hypothesis, we find that \[2\frac{\pi^{2k}}{2k+1}-2a_0(k)^2=r_n(k,0)\zeta(2k)+\sum_{j=1}^{2\left\lfloor\frac{k-1}{2}\right\rfloor}\frac{4(-1)^jk!^2}{(2j+2)!}\pi^{2j}\zeta(2k-2j)\mu_j(k)\] \[=4k!^2\zeta(2k)+\sum_{j=1}^{2\left\lfloor\frac{k-1}{2}\right\rfloor}\frac{4(-1)^jk!^2}{(2j+2)!}\pi^{2j}\frac{(-1)^{k-j+1}B_{2k-2j}(2\pi)^{2k-2j}}{2(2k-2j)!}\mu_j(k)\] \[=4k!^2\zeta(2k)+2^{2k+2}(-1)^{k+1}\pi^{2k}k!^2\sum_{j=1}^{k-1}\frac{B_{2k-2j}\mu_j(k)}{2^{2j+1}(2j+2)!(2k-2j)!}\] (changing the limits of summation in the last line above is valid because $\mu_{k-1}(k)=0$ when $k$ is even).
Consequently, 
\[\frac{2(2k)!\zeta(2k)}{(2\pi)^{2k}}=\left(\frac{2^{-2k}}{2k+1}-\frac{a_0(k)^2}{(2\pi)^{2k}}\right){2k\choose k}-(-1)^{k+1}2(2k)!\sum_{j=1}^{k-1}\frac{B_{2k-2j}\mu_j(k)}{2^{2j+1}(2j+2)!(2k-2j)!}\] \[=\left(\frac{2^{-2k}}{2k+1}-\frac{a_0(k)^2}{(2\pi)^{2k}}\right){2k\choose k}+(-1)^k2(2k)!\sum_{j=1}^{k-1}\frac{B_{2k-2j}}{(2j+2)!(2k-2j)!}\]
\begin{equation}\label{Eq14}
-(-1)^k2(2k)!\sum_{j=\left\lfloor\frac{k-1}{2}\right\rfloor+1}^{k-1}\frac{B_{2k-2j}\sum_{i=0}^{j-\left\lfloor\frac{k-1}{2}\right\rfloor-1}{2j+2\choose 2i+1}}{2^{2j}(2j+2)!(2k-2j)!}.
\end{equation} 

Invoking the identity \eqref{Eq3}, we may write \[2(2k)!\sum_{j=1}^{k-1}\frac{B_{2k-2j}}{(2j+2)!(2k-2j)!}=\frac{2}{(2k+1)(2k+2)}\sum_{m=1}^{k-1}B_{2m}{\textstyle{2k+2\choose 2m}}\] 
\begin{equation}\label{Eq15}
=\frac{2}{(2k+1)(2k+2)}\left(k-B_{2k}{\textstyle{2k+2\choose 2k}}\right)=\frac{2k}{(2k+1)(2k+2)}-B_{2k}.
\end{equation} 
Furthermore, we may apply Lemma \ref{Lem1} to see that \[2(2k)!\sum_{j=\left\lfloor\frac{k-1}{2}\right\rfloor+1}^{k-1}\frac{B_{2k-2j}\sum_{i=0}^{j-\left\lfloor\frac{k-1}{2}\right\rfloor-1}{2j+2\choose 2i+1}}{2^{2j}(2j+2)!(2k-2j)!}=2\sum_{t=1}^{\left\lfloor k/2\right\rfloor}\frac{B_{2t}\sum_{i=0}^{\left\lfloor k/2\right\rfloor-t}{2k-2t+2\choose 2i+1}}{2^{2k-2t}(2k+1)(2k+2)}{\textstyle{2k+2\choose 2t}}\] \[=\frac{2^{1-2k}}{(2k+1)(2k+2)}\left[\sum_{i+t\leq\left\lfloor k/2\right\rfloor}B_{2t}2^{2t}{\textstyle{2k+2\choose 2t,2i+1,2k-2t-2i+1}}-\sum_{i=0}^{\left\lfloor k/2\right\rfloor}{\textstyle{2k+2\choose 2i+1}}\right]\] 
\begin{equation}\label{Eq16}
=\frac{2^{1-2k}}{(2k+1)(2k+2)}\left[(k+1)\left(2^{2k}+(-1)^k{\textstyle{2k\choose k}}\right)-\left(2^{2k}+{\textstyle{2k+1\choose k}}e_k\right)\right],
\end{equation}
where $e_k=0$ if $k$ is odd and $e_k=1$ if $k$ is even. 

If $k$ is odd, then \eqref{Eq14}, \eqref{Eq15}, and \eqref{Eq16} combine to show that $\displaystyle{\frac{2(2k)!\zeta(2k)}{(2\pi)^{2k}}}$ is equal to \[\frac{2^{1-2k}}{(2k+1)(2k+2)}\left[(k+1){\textstyle{2k\choose k}}-k2^{2k}+(k+1)\left(2^{2k}-{\textstyle{2k\choose k}}\right)-2^{2k}\right]+B_{2k},\] which simplifies to $B_{2k}$. This yields \eqref{Eq2} when $k$ is odd. If $k$ is even, $a_0(k)^2=\dfrac{\pi^{2k}}{(k+1)^2}$. Hence, we may again invoke \eqref{Eq14}, \eqref{Eq15}, and \eqref{Eq16} when $k$ is even to find that \[\frac{2(2k)!\zeta(2k)}{(2\pi)^{2k}}=\left(\frac{2^{-2k}}{2k+1}-\frac{a_0(k)^2}{(2\pi)^{2k}}\right){\textstyle{2k\choose k}}+\frac{2k}{(2k+1)(2k+2)}-B_{2k}\] \[-\frac{2^{1-2k}}{(2k+1)(2k+2)}\left[(k+1)\left(2^{2k}+{\textstyle{2k\choose k}}\right)-\left(2^{2k}+{\textstyle{2k+1\choose k}}\right)\right]\] \[=\frac{2^{1-2k}}{(2k+1)(2k+2)}\left[\frac{k^2}{k+1}{\textstyle{2k\choose k}}+k2^{2k}-(k+1)\left(2^{2k}+{\textstyle{2k\choose k}}\right)+\left(2^{2k}+{\textstyle{2k+1\choose k}}\right)\right]-B_{2k}\] \[=\frac{2^{1-2k}}{(2k+1)(2k+2)}\left[\frac{k^2}{k+1}{\textstyle{2k\choose k}}-(k+1){\textstyle{2k\choose k}}+\frac{2k+1}{k+1}{\textstyle{2k\choose k}}\right]-B_{2k}=(-1)^{k+1}B_{2k}.\] This proves \eqref{Eq2} when $k$ is even, completing the induction.


\begin{thebibliography}{9}
\bibitem{Havil}
Havil, Julian. Gamma: exploring Euler's constant. Princeton University Press, 2003. 

\bibitem{Kuo}
Kuo, Huan-Ting. A recurrence formula for $\zeta(2n)$. Bull. Amer. Math. Soc. {\bf 55} (1949), 573--574.

\bibitem{Navas}
L. M. Navas, F. J. Ruiz, and J. L. Varona. Old and new identities for Bernoulli polynomials via Fourier series, Int. J. Math.
Math. Sci. (2012), Art. ID 129126.

\bibitem{Rademacher}
Rademacher, Hans. Topics in analytic number theory. Springer Science and Business Media, 2012. 

\bibitem{Rudin}
Rudin, Walter. Principles of mathematical analysis. Vol. 3. New York: McGraw-Hill, 1964. 
\end{thebibliography}
\end{document}